\theoremstyle{plain}
\newtheorem{theorem}{Theorem}[section]
\newtheorem{lemma}[theorem]{Lemma}
\newtheorem{proposition}[theorem]{Proposition}
\theoremstyle{definition}
\newtheorem{definition}[theorem]{Definition}
\newtheorem{question}{Question}
\theoremstyle{remark}
\newcommand{\cI}{\mathcal{I}}
\newcommand{\I}{\cI}
\newcommand{\cJ}{\mathcal{J}}
\newcommand{\J}{\cJ}
\newcommand{\cP}{\mathcal{P}}
\newcommand{\cS}{\mathcal{S}}
\newcommand{\fin}{\mathrm{Fin}}
\newcommand{\Exh}{\mathrm{Exh}}
\newcommand{\conv}{\mathrm{conv}}
\begin{document}


\title{On extendability to $F_\sigma$ ideals}


\author[A.~Kwela]{Adam Kwela}
\address[Adam Kwela]{Institute of Mathematics\\ Faculty of Mathematics\\ Physics and Informatics\\ University of Gda\'{n}sk\\ ul.~Wita  Stwosza 57\\ 80-308 Gda\'{n}sk\\ Poland}
\email{Adam.Kwela@ug.edu.pl}
\urladdr{http://kwela.strony.ug.edu.pl/}


\subjclass[2010]{Primary:  03E05, 03E15, 54H05; Secondary:  26A03, 40A05, 54A20.}


\keywords{Ideal, Kat\v{e}tov order, asymptotic density}


\begin{abstract}
Answering in negative a question of M. Hru\v{s}\'ak, we construct a Borel ideal not extendable to any $F_\sigma$ ideal and such that it is not Kat\v{e}tov above the ideal $\mathrm{conv}$. 
\end{abstract}


\maketitle


\section{Introduction}

We use standard set-theoretic notation. In particular, a collection $\mathcal{I}$ of subsets of a set $X$ is called an ideal on $X$ if it is closed under subsets and finite unions of its elements. We assume additionally that $\mathcal{P}(X)$ (i.e., the power set of $X$) is not an ideal, and that every ideal contains all finite subsets of $X$ (hence, $X=\bigcup\I$). All ideals considered in this paper are defined on infinite countable sets. 

We treat the power set $\mathcal{P}(X)$ as the space $2^X$ of all functions $f:X\rightarrow 2$ (equipped with the product topology, where each space $2=\left\{0,1\right\}$ carries the discrete topology) by identifying subsets of $X$ with their characteristic functions. Thus, we can talk about descriptive complexity of subsets of $\mathcal{P}(X)$ (in particular, of ideals on $X$). 

This article is motivated by a problem of M. Hru\v{s}\'ak concerning characterization of Borel ideals that can be extended to a $\bf{\Sigma^0_2}$ ideal (i.e., such Borel ideals $\I$ that there is a $\bf{\Sigma^0_2}$ ideal $\J$ with $\I\subseteq\J$). In order to formulate this question in a precise way, we need to recall two definitions:
\begin{itemize}
\item if $\I$ and $\J$ are ideals then we say that $\I$ is below $\J$ in the Kat\v{e}tov preorder (and write $\I\leq_K\J$), if there is $f:\bigcup\J\to \bigcup\I$ such that $f^{-1}[A]\in\J$ for each $A\in\I$ (cf. \cite[Subsection 1.3]{Hrusak} or \cite[Subsection 1.5]{Meza});
\item by $\conv$ we denote the ideal on $\mathbb{Q}\cap[0,1]$ generated by sequences in $\mathbb{Q}\cap[0,1]$ that are convergent in $[0,1]$, i.e., $A\subseteq\mathbb{Q}\cap[0,1]$ belongs to the ideal $\conv$ if it can be covered by finitely many such sequences (cf. \cite[Subsection 3.4]{Hrusak} or \cite[Subsection 1.6]{Meza}).
\end{itemize}
A property of ideals can often be expressed by finding a critical ideal (in sense of the Kat\v{e}tov preorder) with respect to this property. This approach proved to be especially effective in many papers including \cite{Hrusak}, \cite{Hrusak2}, \cite{WR}, \cite{EUvsSD} and \cite{Meza}. The above mentioned question of M. Hru\v{s}\'ak is the following: 

\begin{question}
Is it true that, if $\I$ is a Borel ideal then either $\conv\leq_K\I$ or there is a $\bf{\Sigma^0_2}$ ideal containing $\I$?
\end{question}

This problem has been asked in \cite[Question 5.16]{Hrusak} and repeated in \cite[Question 5.8]{Hrusak2}. It is known that $\conv$ is a $\bf{\Sigma^0_4}$ ideal that cannot be extended to any $\bf{\Sigma^0_2}$ ideal (by \cite[Propositions 3.4 and 4.1]{Gdansk} and \cite[Subsection 2.7]{Meza}). Thus, a positive answer to M. Hru\v{s}\'ak's question would establish a combinatorial characterization of Borel ideals extendable to $\bf{\Sigma^0_2}$ ideals. However, in this paper we answer it in the negative.

A very similar problem to the above one has been posed by D. Meza-Alc\'antara in \cite[Question 4.4.6]{Meza}: Is it true that, if $\I$ is a Borel ideal then either $\conv\leq_K\I|A$ (here $\I|A=\{B\subseteq A:\ B\in\I\}$) for some $A\notin\I$ or there is a $\bf{\Sigma^0_2}$ ideal containing $\I$? This question remains open.

$\bf{\Sigma^0_2}$ ideals are closely related to the notion of P$^+$-ideals. We say that an ideal $\I$ on $X$ is a P$^+$-ideal if for each decreasing sequence $(A_n)$ of sets not belonging to $\I$ one can find $B\notin\J$ such that $B\setminus A_n$ is finite for all $n\in\omega$ (\cite[Definition 2.2.3]{Meza}). This notion has been studied by for instance in \cite[Subsection 1.1]{Hrusak}. By \cite[Theorem 3.2.7]{Meza}, a Borel ideal is extendable to a $\bf{\Sigma^0_2}$ ideal if and only if it is extendable to a P$^+$-ideal. Therefore, original question of M. Hru\v{s}\'ak can be reformulated in the following way: Is is true that a Borel ideal $\I$ is extendable to a P$^+$-ideal if and only if $\conv\not\leq_K\I$?

It is worth mentioning that M. Hru\v{s}\'ak's conjecture holds for all analytic P-ideals (an ideal $\I$ is called a P-ideal if for every $(A_n)\subseteq\I$ there is $A\in\I$ with $A\setminus A_n$ finite for all $n$), i.e., an analytic P-ideal $\I$ is extendable to a $\bf{\Sigma^0_2}$ ideal if and only if $\conv\not\leq_K\I$ (cf. \cite[Theorem 4.2]{Gdansk} and \cite[Subsection 2.7]{Meza}). Moreover, by \cite[Theorem 3.2.14]{Meza}, if $\I$ is a Borel ideal such that the forcing $\cP(\omega)/\I$ is proper then either it is extendable to a $\bf{\Sigma^0_2}$ ideal or there is $A\notin\I$ with $\conv\leq_K\I|A$.

In Section 2 we introduce some necessary notions. Section 3 contains the solution of M. Hru\v{s}\'ak's question. Section 4 is devoted to some concluding remarks.

\section{Preliminaries}

If $\I$ and $\J$ are ideals on $X$ and $Y$, respectively, then the ideal:
$$\I\oplus\J=\left\{A\subseteq (\{0\}\times X)\cup(\{1\}\times Y):\right.$$
$$\left.\{x\in X:\ (0,x)\in A\}\in\I\text{ and }\{y\in Y:\ (1,y)\in A\}\in\J\right\}$$
is their disjoint sum (see \cite{Farah}). Two ideals $\I$ and $\J$ are isomorphic if there is a bijection $f:\bigcup\J\to \bigcup\I$ such that 
$$f^{-1}[A]\in\J\ \Longleftrightarrow\ A\in\I$$
for all $A\subseteq\bigcup\I$. Isomorphisms of ideals have been deeply studied for instance in \cite{Tryba} (see also \cite{Farah} and \cite{EUvsSD}). 

Recall the definition of the classical ideal of asymptotic density zero sets: 
$$\I_{d}=\left\{A\subseteq\omega:\ \lim_{n\to\infty}\frac{|A\cap[0,n]|}{n+1}=0\right\},$$
where by $[0,n]$ we denote the set $\{0,1,\ldots,n\}$. This ideal has been deeply investigated in the past in the context of convergence (see e.g. \cite{Fast}, \cite{Fridy}, \cite{Salat} and \cite{Steinhaus}) as well as from the set-theoretic point of view (see e.g. \cite{Farah}, \cite{Just} and \cite{EUvsSD}).

In our considerations we will need some new notions which we introduce below. 

In this paper we denote by $\cS$ the family of all sequences in $(0,\frac{1}{2}]$ decreasing to zero, i.e.:
$$\cS=\left\{(\alpha_n)\in\left(0,\tfrac{1}{2}\right]^\omega:\ \lim_n\alpha_n=0\text{ and }\alpha_{n+1}<\alpha_n\text{ for all }n\in\omega\right\}.$$
For $x\in[0,1]$ and $r>0$ by $B(x,r)$ we denote the ball of radius $r$ centered at $x$, i.e., $B(x,r)=(x-r,x+r)$. Note that for every $(\alpha_n)\in\cS$ and every $x\in[0,1]$ we have $(B(x,\alpha_0)\setminus B(x,\alpha_1))\cap[0,1]\neq\emptyset$.

\begin{definition}
If $(\alpha_n)\in\cS$ and $\I$ is an ideal on $\omega$ then we say that a sequence $(x_k)\in[0,1]^\omega$ converges $\I$-quickly with respect to $(\alpha_n)$ if there is $x\in[0,1]$ such that $\lim_k x_k=x$ and:
$$\left\{n\in\omega:\ \{x_k:\ k\in\omega\}\cap (B(x,\alpha_n)\setminus B(x,\alpha_{n+1}))\neq\emptyset\right\}\in\I.$$
\end{definition}

\begin{definition}
If $(\alpha_n)\in\cS$ and $\I$ is an ideal on $\omega$ then $\conv(\I,(\alpha_n))$ is the ideal on $[0,1]\cap\mathbb{Q}$ generated by all sequences converging $\I$-quickly with respect to $(\alpha_n)$, i.e., $A\subseteq\mathbb{Q}\cap[0,1]$ belongs to $\conv(\I,(\alpha_n))$ if it can be covered by finitely many sequences converging $\I$-quickly with respect to $(\alpha_n)$.
\end{definition}

Our counterexample will be of the form $\conv(\I,(\alpha_n))$. We start with a simple general result concerning such ideals.

\begin{proposition}
\label{analytic}
If $\I$ is an analytic ideal on $\omega$ and $(\alpha_n)\in\cS$ then $\conv(\I,(\alpha_n))$ is also analytic.
\end{proposition}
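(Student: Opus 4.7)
The plan is to express $\conv(\I,(\alpha_n))$ as a countable union of projections of analytic relations, using the closure of the analytic pointclass under countable unions, Borel preimages, and continuous images. Write $T=\bQ\cap[0,1]$. First I would unpack the definition: call a set $B\subseteq T$ \emph{$\I$-quick to $x$} if $B$ converges to $x$ as a set (every neighborhood of $x$ omits finitely many elements of $B$) and $C(B,x):=\{n\in\omega:B\cap(B(x,\alpha_n)\setminus B(x,\alpha_{n+1}))\neq\emptyset\}\in\I$. A short verification shows that $A\subseteq T$ belongs to $\conv(\I,(\alpha_n))$ if and only if there exist $m\in\omega$, points $x_0,\ldots,x_m\in[0,1]$, and sets $B_0,\ldots,B_m\subseteq T$ with $A\subseteq\bigcup_{i\leq m}B_i$ such that each $B_i$ is $\I$-quick to $x_i$. (The gap between ``$B$ is $\I$-quick to $x$'' and ``$B$ is the range of a sequence converging $\I$-quickly to $x$'' is handled by enumerating an infinite $B$ as a sequence or absorbing finite sets into constant tails.)

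The crucial step is to prove analyticity of the auxiliary relation
$$Q=\{(B,x)\in 2^T\times[0,1]:B\text{ is }\I\text{-quick to }x\}.$$
Membership in $Q$ splits into two clauses. Clause (a): for every $k\geq 1$ the set $\{b\in B:|b-x|\geq 1/k\}$ is finite. This is Borel in $(B,x)$, since for fixed $k$ the set in question depends Borel-measurably on $(B,x)$ and finiteness is $F_\sigma$ in $2^T$. Clause (b): $C(B,x)\in\I$. The key observation here is that the map $\Phi:(B,x)\mapsto C(B,x)\in 2^\omega$ is Borel, because for each fixed $n$ the condition $n\in\Phi(B,x)$ is a countable union over $b\in T$ of the Borel condition ``$b\in B$ and $\alpha_{n+1}\leq|b-x|<\alpha_n$''. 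Hence $\Phi^{-1}[\I]$ is analytic whenever $\I$ is, and $Q$ is analytic as the intersection of a Borel set with an analytic one.

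For the assembly, I would fix $m\in\omega$, let $W_m=2^T\times[0,1]^{m+1}\times(2^T)^{m+1}$, and define
$$R_m=\Bigl\{(A,(x_i),(B_i))\in W_m:A\subseteq\bigcup_{i\leq m}B_i\text{ and }(B_i,x_i)\in Q\text{ for every }i\leq m\Bigr\}.$$
The containment clause is Borel (even closed), and each of the $m+1$ remaining clauses is analytic as the preimage of $Q$ under a continuous coordinate projection $W_m\to 2^T\times[0,1]$; thus $R_m$ is analytic. The unpacking then yields $\conv(\I,(\alpha_n))=\bigcup_{m\in\omega}\pi[R_m]$, where $\pi$ denotes projection onto the first coordinate, exhibiting $\conv(\I,(\alpha_n))$ as a countable union of continuous images of analytic sets.

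The only nontrivial point is the Borel measurability of $\Phi:(B,x)\mapsto C(B,x)$, from which the analyticity of $Q$ (and ultimately of $\conv(\I,(\alpha_n))$) follows; everything else is standard closure bookkeeping in the analytic pointclass.
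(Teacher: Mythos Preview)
Your argument is correct and follows essentially the same strategy as the paper: both hinge on the Borel measurability of the map sending a set (and a limit point) to the set of indices $n$ for which the annulus $B(x,\alpha_n)\setminus B(x,\alpha_{n+1})$ is hit, after which $\conv(\I,(\alpha_n))$ is obtained by projecting an analytic relation. The only cosmetic difference is that the paper avoids introducing the auxiliary pieces $B_0,\ldots,B_m$: instead of requiring $A\subseteq\bigcup_i B_i$ with each $(B_i,x_i)\in Q$, it observes directly that $A\in\conv(\I,(\alpha_n))$ iff there are $x_0,\ldots,x_{k-1}\in[0,1]$ with $f_{x_i}(A)\in\I$ for every $i$ and $A$ accumulating only at the $x_i$'s (the interference between different limit points contributes only finitely many annulus indices, hence stays in $\I$), so the projection is over $[0,1]^k$ alone rather than $[0,1]^{m+1}\times(2^T)^{m+1}$.
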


\begin{proof}
Fix an ideal $\I$ on $\omega$ and $(\alpha_n)\in\cS$. Observe that:
$$\conv(\I,(\alpha_n))=\left\{A\subseteq[0,1]\cap\mathbb{Q}:\ \exists_{k\in\omega}\ \exists_{x_0,\ldots,x_{k-1}\in[0,1]}\ \left(\forall_{i<k}\ A\in f_{x_i}^{-1}[\I]\right)\ \wedge\right.$$
$$\left. \left(\forall_{m\in\omega}\ \exists_{F\in[\mathbb{Q}\cap[0,1]]^{<\omega}}\ A\subseteq F\cup\bigcup_{i<k}B\left(x_i,\tfrac{1}{m+1}\right)\right)\ \right\},$$
where $f_x:\mathcal{P}([0,1]\cap\mathbb{Q})\to\mathcal{P}(\omega)$, for each $x\in[0,1]$, is given by:
$$f_{x}(A)=\left\{n\in\omega:\ A\cap \left(B(x,\alpha_n)\setminus B(x,\alpha_{n+1})\right)\neq\emptyset\right\},$$
for all $A\subseteq[0,1]\cap\mathbb{Q}$.

Notice that if $x\in[0,1]$ and $l\in\omega$ then $f_x^{-1}[\{A\subseteq\omega:\ l\in A\}]$ is open and $f_x^{-1}[\{A\subseteq\omega:\ l\notin A\}]$ is closed. Hence, $f_x$ is of Baire class $1$, for every $x\in[0,1]$, and $f_{x}^{-1}[\I]$ is analytic, for each analytic ideal $\I$. It follows that $\conv(\I,(\alpha_n))$ is analytic for every analytic ideal $\I$.
\end{proof}

\section{The counterexample}

The following sequence of lemmas will lead us to the solution of M. Hru\v{s}\'ak's problem.

\begin{lemma}
\label{1}
The following are equivalent for every ideal $\I$ on $\omega$ and every sequence $(\alpha_n)\in\cS$:
\begin{itemize} 
\item[(a)] $\I$ can be extended to a $P^+$-ideal;
\item[(b)] $\conv(\I,(\alpha_n))$ can be extended to a $P^+$-ideal.
\end{itemize}
\end{lemma}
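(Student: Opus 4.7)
The plan is to handle both directions through a single injection $g\colon\omega\to\mathbb{Q}\cap[0,1]$. Fix any $x_0\in[0,1]$ and pick, for each $n\in\omega$, a rational $g(n)$ inside the annulus $B(x_0,\alpha_n)\setminus B(x_0,\alpha_{n+1})$; such a choice exists by density of $\mathbb{Q}$ together with the nonemptiness observation recorded before the definitions, and the pairwise disjointness of the annuli makes $g$ injective. Two structural properties of $g$ drive everything. First, for every $A\subseteq\omega$, the image $g[A]$ is a sequence converging to $x_0$ whose set of indices $n$ with $g[A]\cap(B(x_0,\alpha_n)\setminus B(x_0,\alpha_{n+1}))\neq\emptyset$ equals $A$, so $g[A]$ is $\I$-quickly convergent to $x_0$ iff $A\in\I$. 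Second, since $g(n)\to x_0$, any sequence in $\mathbb{Q}\cap[0,1]$ converging to a point $y\neq x_0$ meets $g[\omega]$ in only finitely many terms.

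For (a)$\Rightarrow$(b), I would extend a given $P^+$-ideal $\J\supseteq\I$ on $\omega$ to
\[
\K:=\{A\subseteq\mathbb{Q}\cap[0,1]:g^{-1}[A]\in\J\}.
\]
That $\K$ is a proper ideal is immediate. The containment $\conv(\I,(\alpha_n))\subseteq\K$ reduces to checking the generators: for an $\I$-quick sequence $S$ to $y$, the first structural property gives $g^{-1}[S]\in\I\subseteq\J$ when $y=x_0$, and the second gives $g^{-1}[S]$ finite when $y\neq x_0$. For $P^+$ of $\K$, given a decreasing $(B_n)$ in $\K^+$, the preimages $(g^{-1}[B_n])$ form a decreasing sequence in $\J^+$; apply $P^+$ of $\J$ to obtain $S\in\J^+$ with $S\setminus g^{-1}[B_n]$ finite for all $n$, and verify that $g[S]$ is the required pseudo-intersection in $\K^+$, using the injectivity identity $g^{-1}[g[S]]=S$ both to check positivity and to transfer finiteness from $S\setminus g^{-1}[B_n]$ to $g[S]\setminus B_n$.

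For (b)$\Rightarrow$(a), dually I would extend a given $P^+$-ideal $\K\supseteq\conv(\I,(\alpha_n))$ to
\[
\mathcal{L}:=\{A\subseteq\omega:g[A]\in\K\}.
\]
Containment $\I\subseteq\mathcal{L}$ is immediate from the first structural property of $g$. For $P^+$, given a decreasing $(A_n)$ in $\mathcal{L}^+$, apply $P^+$ of $\K$ to the decreasing sequence $(g[A_n])$ in $\K^+$ to obtain $B\in\K^+$ with $B\setminus g[A_n]$ finite for all $n$. A small step deserves comment: since $g[A_n]\subseteq g[\omega]$, the set $B\setminus g[\omega]$ is contained in $B\setminus g[A_0]$ and is thus finite, so $B\cap g[\omega]$ remains in $\K^+$ and we may assume $B\subseteq g[\omega]$; then $g^{-1}[B]$ is the desired pseudo-intersection in $\mathcal{L}^+$.

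No serious obstacle is anticipated: once $g$ is in place, the two halves are mirror-image transports of the pseudo-intersection property across $g$, and the verifications are essentially bookkeeping. The only step requiring a moment's care is the observation in (b)$\Rightarrow$(a) that the pseudo-intersection produced by $P^+$ of $\K$ can be arranged to live inside $g[\omega]$, which is precisely what permits pulling it back to a subset of $\omega$.
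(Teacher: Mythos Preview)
Your argument for (a)$\Rightarrow$(b) is correct and essentially identical to the paper's: the paper takes $x_0=0$ and $g(n)=\alpha_n$, but the mechanism is the same.

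The direction (b)$\Rightarrow$(a), however, has a real gap. You define $\mathcal{L}=\{A\subseteq\omega:g[A]\in\K\}$ but never check that $\mathcal{L}$ is proper, i.e.\ that $g[\omega]\notin\K$. This can fail. Since $g$ is fixed in advance, independently of the $P^+$-ideal $\K$, nothing prevents $\K$ from swallowing $g[\omega]$. Concretely, take $\I=\Fin$, so that $\conv(\Fin,(\alpha_n))=\Fin$ on $\mathbb{Q}\cap[0,1]$; pick any infinite $Y\subseteq(\mathbb{Q}\cap[0,1])\setminus g[\omega]$ and let $\K=\{A:A\cap Y\in\Fin\}$. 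Then $\K$ is a $P^+$-ideal containing $\conv(\Fin,(\alpha_n))$, yet $g[\omega]\in\K$, and your $\mathcal{L}$ collapses to $\mathcal{P}(\omega)$. More generally, your own construction in (a)$\Rightarrow$(b), applied with a \emph{different} base point $x_0'$, already produces a $P^+$-extension in which the entire complement of the new $g'[\omega]$ lies in the ideal---so your original $g[\omega]$ will typically be absorbed.

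The paper handles this by not fixing the embedding in advance. Given $\K$, it first observes that $\conv\not\subseteq\K$ (since $\conv$ is not extendable to any $P^+$-ideal), so there exists some convergent sequence $A\notin\K$; it then builds the transport map from \emph{that} particular $A$. In effect the paper chooses the analogue of your $g[\omega]$ depending on $\K$, guaranteeing positivity. Your proof is easily repaired along the same lines, but the appeal to the non-$P^+$-extendability of $\conv$ is a genuine missing idea, not bookkeeping.
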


\begin{proof}
(b)$\implies$(a): Suppose that $\conv(\I,(\alpha_n))\subseteq\J$ for some $P^+$-ideal $\J$ (on $[0,1]\cap\mathbb{Q}$). If each convergent sequence is in $\J$, then $\conv\subseteq\J$, which contradicts the choice of $\J$ (as $\conv$ cannot be extended to any $P^+$-ideal -- see the Introduction for details). Thus, there is a convergent sequence $A\in\conv\setminus\J$ (so also $A\notin\conv(\I,(\alpha_n))$). Let $x\in[0,1]$ be the limit of $A$. Without loss of generality we may assume that $A\subseteq B(x,\alpha_0)$. Note that $\I\leq_K\conv(\I,(\alpha_n))|A\subseteq\J|A$ as witnessed by the function $f:A\to\omega$ given by: 
$$f(i)=n\ \Leftrightarrow\ i\in B(x,\alpha_n)\setminus B(x,\alpha_{n+1})$$ 
for all $i\in A$.

Observe that $\I$ is a subset of $\J'=\{C\subseteq\omega:\ f^{-1}[C]\in\J|A\}$ as for each $C\in\I$ we have $f^{-1}[C]\in\J|A$. To finish the proof, we will show that $\J'$ is a $P^+$-ideal. It is easy to check that $\J'$ is indeed an ideal, so let $(C_n)$ be a decreasing sequence of sets not belonging to $\J'$. Define $B_n=f^{-1}[C_n]\subseteq A$ for all $n\in\omega$. Then each $B_n$ does not belong to $\J|A$ (as $C_n\notin\J'$) and $(B_n)$ is decreasing. Since $\J$ is a $P^+$-ideal, so is $\J|A$. Hence, there is $X\subseteq A$, $X\notin\J|A$ such that $X\setminus B_n$ is finite for each $n$. Then $f[X]\setminus C_n\in\fin$ and $f[X]\notin\J'$ (as $f^{-1}[f[X]]\supseteq X\notin\J|A$). Thus, $\J'$ is a $P^+$-ideal. 

(a)$\implies$(b): Suppose that $\J$ is a $P^+$-ideal such that $\I\subseteq\J$. Consider the sequence $X=\{\alpha_n:\ n\in\omega\}$. Clearly, $\lim X=0$ and $X\notin\conv(\I,(\alpha_n))$. Let $f:\omega\to X$ be given by $f(n)=\alpha_n$ for all $n$. Observe that $f$ witnesses that $\I$ and $\conv(\I,(\alpha_n))|X$ are isomorphic. Consider the ideal $\J'=\{A\subseteq\mathbb{Q}\cap[0,1]:\ f^{-1}[A\cap X]\in\J\}$ (which is isomorphic with $\J\oplus\cP(\omega)$). Note that $\conv(\I,(\alpha_n))\subseteq\J'$. Moreover, similarly as above it can be shown that $\J'$ is a $P^+$-ideal. This ends the proof.
\end{proof}

\begin{lemma}
\label{2}
We have $\conv\not\leq_K\conv(\I,(\alpha_n))$, for every $(\alpha_n)\in\cS$ and every ideal $\I$ on $\omega$.
\end{lemma}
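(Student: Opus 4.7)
The plan is to argue by contradiction. Suppose $f:\mathbb{Q}\cap[0,1]\to\mathbb{Q}\cap[0,1]$ witnesses $\conv\leq_K\conv(\I,(\alpha_n))$, and write $F_q:=f^{-1}[\{q\}]$. Since $\{q\}\in\conv$ for every $q$, each fiber $F_q$ lies in $\conv(\I,(\alpha_n))$ and is therefore covered by finitely many $\I$-quickly convergent sequences, each with a unique accumulation point in $[0,1]$. Consequently $F_q$ has only finitely many accumulation points, so $\overline{F_q}$ is a countable, scattered, nowhere dense subset of $[0,1]$.

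Next I would use the Baire category theorem to show that $f[B\cap\mathbb{Q}]$ is infinite for every non-empty open $B\subseteq[0,1]$: otherwise, $B\cap\mathbb{Q}$ would be a finite union of sets $F_{r_i}\cap B$, and some $\overline{F_{r_i}\cap B}$ would have non-empty interior in the complete metric space $\overline{B}$, contradicting the nowhere density established above. Fix now an arbitrary $y\in[0,1]$; for each $m\in\omega$ the annulus $B(y,\alpha_m)\setminus B(y,\alpha_{m+1})$ is a non-empty open subset of $[0,1]$, so its $f$-image is infinite, and I may inductively choose $p^{(m)}\in(B(y,\alpha_m)\setminus B(y,\alpha_{m+1}))\cap\mathbb{Q}$ such that the values $q^{(m)}:=f(p^{(m)})$ are pairwise distinct. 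Then $p^{(m)}\to y$ and the set $\{p^{(m)}:m\in\omega\}$ meets every annulus around $y$. The crucial selection step is to produce $M\subseteq\omega$ with $M\notin\I$ such that $\{q^{(m)}:m\in M\}\in\conv$: if the whole sequence $(q^{(m)})$ already has finitely many accumulation points, set $M=\omega$; otherwise one applies an iterated $\I^+$-pigeonhole argument, partitioning $[0,1]$ by successively finer closed covers and always retaining an $\I^+$-class of indices, to extract the desired $M$.

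The contradiction then follows. With $A:=\{q^{(m)}:m\in M\}\in\conv$, the hypothesis gives a cover $f^{-1}[A]\subseteq S_1\cup\cdots\cup S_k$ with each $S_i$ $\I$-quickly convergent to some $z_i\in[0,1]$. Since $\{p^{(m)}:m\in M\}\subseteq f^{-1}[A]$ converges only to $y$, any $S_i$ containing infinitely many $p^{(m)}$'s must satisfy $z_i=y$, while the remaining $S_i$'s accommodate only finitely many of them. Thus, modulo a finite set,
\[
M\subseteq\bigcup_{i:\,z_i=y}\{n:S_i\cap(B(y,\alpha_n)\setminus B(y,\alpha_{n+1}))\neq\emptyset\},
\]
which is a finite union of elements of $\I$, hence itself in $\I$. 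This forces $M\in\I$, contradicting its choice. The principal obstacle is the selection step in the preceding paragraph: producing such an $M$ for arbitrary proper $\I$ (in particular non-P-ideals) requires a careful diagonal construction or a judicious choice of $y$ via Baire category so that the sequence $(q^{(m)})$ exhibits enough accumulation structure to allow the extraction.
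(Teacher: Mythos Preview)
Your argument has a genuine gap at precisely the point you flag as ``the principal obstacle'': the selection of $M\notin\I$ with $\{q^{(m)}:m\in M\}\in\conv$. The iterated $\I^+$-pigeonhole idea yields a decreasing chain $M_0\supseteq M_1\supseteq\cdots$ of $\I$-positive index sets (bisecting $[0,1]$ repeatedly), but to extract a single $\I$-positive $M$ on which $(q^{(m)})$ converges you would need a P$^+$-type property of $\I$. The lemma, however, must hold for \emph{every} ideal---in particular for $\I=\I_d$, which is exactly \emph{not} a P$^+$-ideal (this is what makes $\conv(\I_d,(\alpha_n))$ the intended counterexample in the paper). Your fallback, a ``judicious choice of $y$ via Baire category'', is not carried out, and it is not clear what category argument could simultaneously control the $f$-images across all annuli in a way compatible with an arbitrary $\I$. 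As written, the proposal is a strategy with an unresolved core step rather than a proof.

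The paper avoids this difficulty entirely by changing the target. Instead of manipulating a putative Kat\v{e}tov witness $f$, it invokes the combinatorial characterization $\conv\leq_K\J$ iff some countable family $\{X_n\}$ splits every $\J$-positive set. Given any such $\{X_n\}$, a nested-interval construction is run in which at stage $n$ one passes to whichever of $Y_n\cap X_{n+1}$, $Y_n\setminus X_{n+1}$ remains somewhere dense; this produces a point $x=\bigcap_n I_n$ and a set $X=\{x_k\}$ with one $x_k$ chosen in each annulus $B(x,\alpha_k)\setminus B(x,\alpha_{k+1})$ from the appropriate $Y_{m_k}$. Because $X$ meets every annulus around its limit, $X\notin\conv(\I,(\alpha_n))$ for \emph{any} $\I$; and by construction $X$ is, modulo finite sets, entirely inside or entirely outside each $X_n$, so it is unsplit. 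The ideal $\I$ never enters the construction---only the trivial observation that a sequence hitting every annulus cannot be $\I$-quickly convergent. This uniformity in $\I$ is exactly what your approach lacks: you make the dependence on $\I$ essential at the hardest step, whereas the paper's route eliminates it altogether.
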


\begin{proof}
We will use the following characterization: $\conv\leq_K\J$ if and only if there is a countable family $\{X_n:\ n\in\omega\}\subseteq[\bigcup\J]^\omega$ such that for every $A\notin\J$ there is $n\in\omega$ such that both $A\cap X_n$ and $A\setminus X_n$ are infinite (cf. \cite[Theorem 2.4.3]{Meza}).

Let $\{X_n:\ n\in\omega\}\subseteq[[0,1]\cap\mathbb{Q}]^\omega$. We will recursively define $(i_n)\in 2^\omega$, $(Y_n)\subseteq[[0,1]\cap\mathbb{Q}]^\omega$ and a sequence $(I_n)$ of closed subintervals of $[0,1]$ such that:
\begin{itemize}
\item[(a)] $I_{n+1}\subseteq \text{int}(I_n)$ and the length of $I_n$ is at most $\frac{1}{2^{n+1}}$, for all $n\in\omega$;
\item[(b)] $Y_{n+1}\subseteq Y_n$ and $\overline{Y_n}=I_n$, for all $n\in\omega$;
\item[(c)] if $i_n=0$ then $Y_n\cap X_n=\emptyset$ and if $i_n=1$ then $Y_n\subseteq X_n$.
\end{itemize}
First, since $[0,1]=\overline{[0,1]\cap\mathbb{Q}}=\overline{Y_0^0}\cup\overline{Y_0^1}$, where $Y_0^0=([0,1]\cap\mathbb{Q})\setminus X_0$ and $Y_0^1=X_0$, there is $i_0\in\{0,1\}$ such that $Y_0^{i_0}$ is dense in some open subinterval of $[0,1]$. Find a closed interval $I_0\subseteq[0,1]$ of length at most $\frac{1}{2}$ such that $Y_0^{i_0}$ is dense in $I_0$ and define $Y_0=I_0\cap Y_0^{i_0}$. Note that $\overline{Y_0}=I_0$. At step $n+1$, if all $i_j$, $I_j$ and $Y_j$ for $j\leq n$ are already defined, observe that $I_n=\overline{Y_n}=\overline{Y_{n+1}^0}\cup\overline{Y_{n+1}^1}$, where $Y_{n+1}^0=Y_n\setminus X_{n+1}$ and $Y_{n+1}^1=Y_n\cap X_{n+1}$. Then there is $i_{n+1}\in\{0,1\}$ such that $Y_{n+1}^{i_{n+1}}$ is dense in some open subinterval of $I_n$. Find a closed interval $I_{n+1}\subseteq \text{int}(I_n)$ of length at most $\frac{1}{2^{n+2}}$ such that $Y_{n+1}^{i_{n+1}}$ is dense in $I_{n+1}$ and define $Y_{n+1}=I_{n+1}\cap Y_{n+1}^{i_{n+1}}$. Then $\overline{Y_{n+1}}=I_{n+1}$ and $Y_{n+1}\subseteq Y_n$.

Once the recursion is completed, let $x\in[0,1]$ be the unique point such that $\{x\}=\bigcap_n I_n$. Denote: 
$$k_0=\min\{k\in\omega:\ \text{int} (I_0)\cap(B(x,\alpha_{k})\setminus B(x,\alpha_{k+1}))\neq\emptyset\}.$$ 
Pick $x_k$, for each $k\geq k_0$, such that $x_k\in Y_{m_k}\cap(B(x,\alpha_{k})\setminus B(x,\alpha_{k+1}))$, where: 
$$m_k=\max\{n\in\omega:\ \text{int} (I_n)\cap(B(x,\alpha_{k})\setminus B(x,\alpha_{k+1}))\neq\emptyset\}$$
(item (a) guarantees that each $m_k$ is well-defined). This is possible as each $Y_n$ is dense in $I_n$ (by (b)). Note that $\lim_k m_k=\infty$ (by (a) and the choice of $x$). 

Observe that $X=\{x_k:\ k\in\omega\}\notin\conv(\I,(\alpha_n))$. Indeed, let $B_0,\ldots,B_m$ be sequences converging $\I$-quickly with respect to $(\alpha_n)$ and assume to the contrary that $X\subseteq\bigcup_{i\leq m}B_i$. Since $\I$ is an ideal, without loss of generality we may assume that $\lim B_i\neq\lim B_j$ whenever $i,j\leq m$ are distinct (as a union of finitely many sequences converging $\I$-quickly with respect to $(\alpha_n)$ to the same limit is a sequence converging $\I$-quickly with respect to $(\alpha_n)$). Since $\lim X=x$, there is $i_0\leq m$ such that $x=\lim B_{i_0}$. Then $X\cap\bigcup_{i\leq m,i\neq i_0}B_i$ is finite and $X\setminus\bigcup_{i\leq m,i\neq i_0}B_i$ does not converge $\I$-quickly with respect to $(\alpha_n)$ (as it intersects almost all $B(x,\alpha_{k})\setminus B(x,\alpha_{k+1})$). Hence, $X\setminus\bigcup_{i\leq m,i\neq i_0}B_i$ cannot be covered by $B_{i_0}$. This shows that $X\notin\conv(\I,(\alpha_n))$.

We claim that for each $n\in\omega$ either $X\cap X_n$ or $X\setminus X_n$ is finite. Indeed, fix any $n\in\omega$. If $i_n=0$ then $Y_j\cap X_n=\emptyset$ for all $j\geq n$ (by (b) and (c)). Thus, by (b) and $\lim_k m_k=\infty$ we get $X\cap X_n\subseteq X\setminus Y_n\in[\mathbb{Q}\cap[0,1]]^{<\omega}$. On the other hand, if $i_n=1$ then (b) and (c) give us $Y_j\subseteq X_n$ for all $j\geq n$. Therefore, similarly as before, $X\setminus X_n\subseteq X\setminus Y_n\in[\mathbb{Q}\cap[0,1]]^{<\omega}$. This finishes the proof.
\end{proof}

\begin{lemma}
\label{3}
The ideal $\conv(\I_d,(\frac{1}{2^{n+1}}))$ is $\bf{\Sigma^0_6}$.
\end{lemma}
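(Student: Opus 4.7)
The plan is to use the formula for $\conv(\I,(\alpha_n))$ derived in the proof of Proposition~\ref{analytic} and to track the Borel complexity of each quantifier, using that $\I_d$ is $\bf{\Pi^0_3}$. Indeed, $A \in \I_d$ iff $\forall j\ \exists N\ \forall n \ge N,\ |A \cap [0, n]|/(n+1) < 1/j$, a $\forall \exists \forall$ prefix over a clopen matrix in $A$.

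By Proposition~\ref{analytic}, $A \in \conv(\I_d, (1/2^{n+1}))$ iff there exist $k \in \omega$ and $(x_0, \ldots, x_{k-1}) \in [0, 1]^k$ such that \emph{(I)} $f_{x_i}(A) \in \I_d$ for every $i < k$, and \emph{(II)} for every $m \in \omega$ there is a finite $F \subseteq \bQ \cap [0, 1]$ with $A \subseteq F \cup \bigcup_{i < k} B(x_i, 1/(m+1))$. I would bound the Borel class of (I)$\wedge$(II) as a condition on $(A, x_0, \ldots, x_{k-1})$ jointly, and then handle the two outer existentials.

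For (I), each $f_x$ is of Baire class $1$ (as noted in Proposition~\ref{analytic}), so for fixed $x$, $f_x^{-1}[\I_d]$ is $\bf{\Pi^0_4}$ in $A$. Jointly in $(A, x)$, $\{(A, x) : f_x(A)(k) = 1\} = \bigcup_{q \in \bQ \cap [0, 1]} \{(A, x) : q \in A \wedge 1/2^{k+2} \le |q - x| < 1/2^{k+1}\}$ is a countable union of intersections of a clopen-in-$A$ set with a locally-closed-in-$x$ set, hence $\bf{\Sigma^0_2}$ jointly. This makes the joint map $(A, x) \mapsto f_x(A)$ of Baire class at most $2$, pushing $f^{-1}[\I_d]$ to at most $\bf{\Pi^0_5}$ jointly. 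For (II), direct quantifier counting (``$A \subseteq F \cup \bigcup_i B(x_i, 1/(m+1))$'' is $\bf{\Pi^0_2}$ jointly; $\exists F$ yields $\bf{\Sigma^0_3}$; $\forall m$ yields $\bf{\Pi^0_4}$) gives (II) in $\bf{\Pi^0_4}$ jointly. Therefore (I)$\wedge$(II) is $\bf{\Pi^0_5}$ in $(A, x_0, \ldots, x_{k-1})$.

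Finally, the existential $\exists(x_0, \ldots, x_{k-1}) \in [0, 1]^k$ over a Polish space adds at most one level by Kuratowski--Tarski, giving $\bf{\Sigma^0_6}$ in $A$, and the countable union $\exists k \in \omega$ preserves this class. The main obstacle is the joint Borel analysis of $(A, x) \mapsto f_x(A)$: the shells $B(x, 1/2^{k+1}) \setminus B(x, 1/2^{k+2})$ have $x$-dependent half-open boundaries that are only locally closed in $x$, so the fixed-$x$ estimate of $\bf{\Pi^0_4}$ for $f_x^{-1}[\I_d]$ must be relaxed by one level when $x$ is allowed to vary, and this bump propagates through to the final $\bf{\Sigma^0_6}$ bound.
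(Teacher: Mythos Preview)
Your complexity bookkeeping for (I) and (II) is fine, but the final projection step is a genuine gap. An existential quantifier over an uncountable Polish space such as $[0,1]^k$ is \emph{not} covered by the Kuratowski--Tarski algorithm, which handles only countable quantifiers. In general, projecting a Borel set along $[0,1]$ yields merely an analytic set: indeed, since $\omega^\omega$ embeds as a $G_\delta$ subset of $[0,1]$, every analytic subset of $[0,1]$ is already the first-coordinate projection of some $\bf{\Pi^0_2}$ subset of $[0,1]^2$. So from ``(I)$\wedge$(II) is $\bf{\Pi^0_5}$ jointly'' you can conclude only that $\conv(\I_d,(\tfrac{1}{2^{n+1}}))$ is analytic, which is exactly what Proposition~\ref{analytic} already gives and is the reason the paper explicitly flags in Section~4 that it could not push the general argument past analytic.

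The paper's proof gets around this obstacle by eliminating the real parameters altogether: it writes down an equivalent condition in which the centers $x_0,\ldots,x_k$ are taken in $\mathbb{Q}\cap[0,1]$ (and the ``witness'' for $\I_d$-membership is a finite set $G_i\subseteq[n,l]$ with small $\phi$-value), so that every quantifier is over a countable set and Kuratowski--Tarski genuinely applies. Replacing each real limit $\lim A_i$ by a nearby rational $x_i$ shifts points between adjacent annuli $B(x,\alpha_j)\setminus B(x,\alpha_{j+1})$, and controlling this shift is precisely where the two special features of the example are used: the geometric decay $\alpha_n=2^{-(n+1)}$ bounds the annulus displacement by $\pm 1$, and the submeasure inequality $\phi((G-1)\cup G\cup(G+1))\le 4\phi(G)$ for $\I_d$ absorbs that displacement without leaving the ideal. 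Your approach does not see either of these ingredients, which is a sign that the uncountable projection really is doing illicit work.
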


\begin{proof}
Recall that: 
$$\I_d=\Exh(\phi)=\left\{A\subseteq\omega:\ \lim_n\phi(A\setminus[0,n])=0\right\},$$ 
where $\phi:\mathcal{P}(\omega)\to[0,1]$ given by: 
$$\phi(A)=\sup_{n\in\omega}\frac{|A\cap[0,n]|}{n+1},$$ 
for all $A\subseteq\omega$, is a lower semicontinuous submeasure, i.e., it satisfies $\phi(\emptyset)=0$, $\phi(A)\leq\phi(A\cup B)\leq\phi(A)+\phi(B)$ and $\phi(A)=\lim_n\phi(A\cap[0,n])$ (lower semicontinuity) for all $A,B\subseteq\omega$ (see \cite[Example 1.2.3.(d)]{Farah}). 

We will need the following observation: if $G\in[\omega]^{<\omega}$ and $\min G>0$ then $\phi((G-1)\cup G\cup(G+1))\leq 4\phi(G)$ (here $B+d=\{b+d:\ b\in B\}$ for $d\in\mathbb{Z}$ and $B\subseteq\omega$). Indeed, it is obvious that $\phi(G+1)\leq\phi(G)$. Moreover, since $G$ is finite, there is $d\in\omega$ such that $\phi(G-1)=\frac{|(G-1)\cap[0,d]|}{d+1}$. If $|(G-1)\cap[0,d]|=1$ then $\phi(G-1)=\frac{1}{d+1}\leq\frac{2}{d+2}=2\phi(G)$. If $|(G-1)\cap[0,d]|\neq 1$ then we have:
$$\phi(G-1)=\frac{|(G-1)\cap[0,d]|}{|(G-1)\cap[0,d]|-1}\cdot\frac{|(G-1)\cap[0,d]|-1}{d+1}\leq$$
$$2\cdot\frac{|(G-1)\cap[0,d]|-1}{d+1}\leq 2\phi(G).$$
Thus, $\phi((G-1)\cup G\cup(G+1))\leq \phi(G-1)+\phi(G)+\phi(G+1)\leq 4\phi(G)$.

Denote $\alpha_n=\frac{1}{2^{n+1}}$ for all $n$. We will show that $A\in\conv(\I,(\alpha_n))$ is equivalent to:
$$\exists_{k\in\omega}\ \forall_{m\in\omega}\ \exists_{n\in\omega}\ \exists_{H\in[\mathbb{Q}\cap[0,1]]^{<\omega}}\ \forall_{l\in\omega}\ \exists_{x_0,\ldots,x_k\in\mathbb{Q}\cap[0,1]}\ \exists_{G_0,\ldots,G_k\subseteq[n,l]}$$
$$\left(\left(\forall_{i\leq k}\ \phi(G_i)<\tfrac{1}{m+1}\right)\ \wedge\ \left(\forall_{i,j\leq k,i\neq j}\ B(x_i,\alpha_n)\cap B(x_j,\alpha_n)=\emptyset\right)\ \wedge\ \right.$$
$$\left. A\subseteq H\cup\bigcup_{i\leq k}\left(B(x_i,\alpha_{l+1})\cup\bigcup_{j\in G_i}\left(B(x_i,\alpha_j)\setminus B(x_i,\alpha_{j+1})\right)\right)\ \wedge \right.$$
$$\left.\left(\forall_{i\leq k}\ \forall_{j\in G_i}\ A\cap \left(B(x_i,\alpha_j)\setminus B(x_i,\alpha_{j+1})\right)\text{ is finite}\right)\right).$$
This will finish the proof as the right-hand side condition is clearly $\bf{\Sigma^0_6}$.

($\Rightarrow$): Let $A\in\conv(\I,(\alpha_n))$. Then there is $k\in\omega$ and sequences $A_0,\ldots,A_k$ converging $\I$-quickly with respect to $(\frac{1}{2^{n+1}})$ such that $A\subseteq A_0\cup\ldots\cup A_k$. Let $m\in\omega$ be arbitrary. For each $i\leq k$ there is $n_i\in\omega$ such that
$$\phi\left(\left\{j\in\omega\setminus[0,n_i):\ A_i\cap (B(\lim A_i,\alpha_j)\setminus B(\lim A_i,\alpha_{j+1}))\neq\emptyset\right\}\right)<\tfrac{1}{4(m+1)}.$$
There is also $n'\in\omega$ such that $|\lim A_i-\lim A_j|>2\alpha_{n'}$ for all $i,j\leq k$, $i\neq j$. Let $n=\max(\{n_i:\ i\leq k\}\cup\{1,n'\})$ and $H=A\setminus\bigcup_{i\leq k}B(\lim A_i,\alpha_{n+1})\in[\mathbb{Q}\cap[0,1]]^{<\omega}$. Let $l\in\omega$ be arbitrary and put $G_i=((G'_i-1)\cup G'_i\cup(G'_i+1))\cap[n,l]$ for all $i\leq k$, where
$$G'_i=\left\{j\in[n,l+1]:\ A_i\cap (B(\lim A_i,\alpha_j)\setminus B(\lim A_i,\alpha_{j+1}))\neq\emptyset\right\}.$$
Note that $\phi(G_i)<\frac{1}{m+1}$ (by the observation from the first paragraph of this proof). For each $i\leq k$ find $x_i\in[0,1]\cap\mathbb{Q}$ close to $\lim A_i$ such that:
\begin{itemize}
\item $\forall_{i,j\leq k,i\neq j}\ B(x_i,\alpha_n)\cap B(x_j,\alpha_n)=\emptyset$ (this is possible as $|\lim A_i-\lim A_j|>2\alpha_{n'}$);
\item $\forall_{i\leq k}\ B(\lim A_i,\alpha_{l+2})\subseteq B(x_i,\alpha_{l+1})$;
\item if $i\leq k$ and $n+1\leq j\leq l+1$ then $$B(\lim A_i,\alpha_j)\setminus B(\lim A_i,\alpha_{j+1})\subseteq\bigcup_{j-1\leq p\leq j+1} B(x_i,\alpha_p)\setminus B(x_i,\alpha_{p+1}).$$
\end{itemize}
Then the first two items above guarantee that $A\cap (B(x_i,\alpha_j)\setminus B(x_i,\alpha_{j+1}))$ is finite for each $i\leq k$ and $j\in G_i\subseteq[0,l]$. Moreover, we have: 
$$A\subseteq H\cup\bigcup_{i\leq k}\left(B(x_i,\alpha_{l+1})\cup\bigcup_{j\in G_i}\left(B(x_i,\alpha_j)\setminus B(x_i,\alpha_{j+1})\right)\right).$$
Hence, $A$ satisfies the right-hand side condition.

($\Leftarrow$): First, observe that if $A$ satisfies the right-hand side condition then there is $k\in\omega$ such that for each $l\in\omega$ there are $x_0,\ldots,x_k\in[0,1]\cap\mathbb{Q}$ such that $A\setminus\bigcup_{i\leq k}B(x_i,\alpha_{l+1})$ is finite. Thus, $A\in\conv$ (see \cite[Subsection 1.6]{Meza}). We need to show that each $A\in\conv\setminus\conv(\I,(\alpha_n))$ does not satisfy the right-hand side condition. 

Notice that each $A\in\conv$ is contained in finitely many convergent sequences and if $A\notin\conv(\I,(\alpha_n))$ then at least one of those sequences does not belong to $\conv(\I,(\alpha_n))$. Hence, as the right-hand side condition is closed under subsets, it suffices to show that each convergent sequence $A\notin\conv(\I,(\alpha_n))$ does not satisfy the right-hand side condition.

Suppose that $A$ is a convergent sequence not belonging to $\conv(\I,(\alpha_n))$. Let $k\in\omega$ be arbitrary and using $A\notin\conv(\I,(\alpha_n))$ find $m\in\omega$ such that for every $n\in\omega$ we have:
$$\phi\left(\left\{j\in\omega\setminus[0,n]:\ A\cap (B(\lim A,\alpha_j)\setminus B(\lim A,\alpha_{j+1}))\neq\emptyset\right\}\right)>\tfrac{4}{m+1}.$$
Fix arbitrary $n\in\omega$ and $H\in[\mathbb{Q}\cap[0,1]]^{<\omega}$. Using lower semicontinuity of $\phi$, pick $l\in\omega$ such that: 
$$\phi\left(\left\{j\in(n,l-1):\ (A\setminus H)\cap (B(\lim A,\alpha_j)\setminus B(\lim A,\alpha_{j+1}))\neq\emptyset\right\}\right)>\tfrac{4}{m+1}.$$
Observe that the above implies that $l>n+1$. 

Let $x_0,\ldots,x_k\in[0,1]\cap\mathbb{Q}$ and $G_0,\ldots,G_k\subseteq[n,l]$ be arbitrary such that:
\begin{itemize}
\item[(i)] $\phi(G_i)<\frac{1}{m+1}$ for all $i\leq k$;
\item[(ii)] $B(x_i,\alpha_n)\cap B(x_j,\alpha_n)=\emptyset$ for all $i,j\leq k$, $i\neq j$;
\item[(iii)] $A\cap B(x_i,\alpha_j)\setminus B(x_i,\alpha_{j+1})$ is finite for all $i\leq k$ and $j\in G_i$. 
\end{itemize}
Assume to the contrary that 
$$A\subseteq H\cup\bigcup_{i\leq k}\left(B(x_i,\alpha_{l+1})\cup\bigcup_{j\in G_i}\left(B(x_i,\alpha_j)\setminus B(x_i,\alpha_{j+1})\right)\right).$$ 
By (ii), (iii) and the fact that $A$ is convergent, there is $i_0\leq k$ such that $A\setminus B(x_{i_0},\alpha_{l+1})$ is finite. Then $|x_{i_0}-\lim A|\leq\alpha_{l+1}=\frac{1}{2^{l+2}}<\frac{1}{2^{n+2}}$ (by $l>n+1$) and using (ii) we get: 
$$(A\setminus H)\cap B(\lim A,\alpha_{n+1})\subseteq (A\setminus H)\cap B(x_{i_0},\alpha_{n})\subseteq $$
$$B(x_{i_0},\alpha_{l+1})\cup\bigcup_{j\in G_{i_0}}\left(B(x_{i_0},\alpha_j)\setminus B(x_{i_0},\alpha_{j+1})\right)\subseteq$$
$$B(x_{i_0},\alpha_{l})\cup\bigcup_{j\in G_{i_0}\setminus\{l\}}\left(B(x_{i_0},\alpha_j)\setminus B(x_{i_0},\alpha_{j+1})\right)\subseteq$$
$$B(\lim A,\alpha_{l-1})\cup\bigcup_{j\in G_{i_0}\setminus\{l\}}\left(B(x_{i_0},\alpha_j)\setminus B(x_{i_0},\alpha_{j+1})\right).$$

Note that if $x\in B(x_{i_0},\alpha_j)\setminus B(x_{i_0},\alpha_{j+1})$ for some $n\leq j<l$ then: 
$$x\in \bigcup_{p\in\{j-1,j,j+1\}}\left(B(\lim A,\alpha_p)\setminus B(\lim A,\alpha_{p+1})\right)=B(\lim A,\alpha_{j-1})\setminus B(\lim A,\alpha_{j+2}).$$
Indeed, $|x-x_{i_0}|<\alpha_j=\frac{1}{2^{j+1}}$ implies:
$$|x-\lim A|\leq |x-x_{i_0}|+|x_{i_0}-\lim A|<\frac{1}{2^{j+1}}+\frac{1}{2^{l+2}}\leq\frac{1}{2^{j+1}}+\frac{1}{2^{j+3}}<\frac{1}{2^{j}}=\alpha_{j-1}.$$ 
Analogously, 
$|x-x_{i_0}|\geq\alpha_{j+1}=\frac{1}{2^{j+2}}$ implies:
$$|x-\lim A|\geq |x-x_{i_0}|-|x_{i_0}-\lim A|>\frac{1}{2^{j+2}}-\frac{1}{2^{l+2}}\geq\frac{1}{2^{j+2}}-\frac{1}{2^{j+3}}=\frac{1}{2^{j+3}}=\alpha_{j+2}.$$ 

Therefore, we obtain:
$$(A\setminus H)\cap B(\lim A,\alpha_{n+1})\subseteq B(\lim A,\alpha_{l-1})\cup$$
$$\bigcup_{j\in (G_{i_0}-1)\cup G_{i_0}\cup(G_{i_0}+1)}B(\lim A,\alpha_j)\setminus B(\lim A,\alpha_{j+1})$$
However, 
$$\phi(((G_{i_0}-1)\cup G_{i_0}\cup(G_{i_0}+1))\cap(n,l-1))<4\phi(G_{i_0})<\tfrac{4}{m+1}$$ 
(by the observation from the first paragraph of this proof). This contradicts the choice of $l$ and finishes the proof.
\end{proof}

We are ready to answer the M. Hru\v{s}\'ak's question formulated in the Introduction.

\begin{theorem}
There is a $\bf{\Sigma^0_6}$ ideal $\I$ not extendable to a $P^+$-ideal and such that $\conv\not\leq_K\I$.
\end{theorem}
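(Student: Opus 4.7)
The plan is to define $\I := \conv(\I_d,(\alpha_n))$ with $\alpha_n = \frac{1}{2^{n+1}}$ (so that $(\alpha_n) \in \cS$), and then verify the three conclusions of the theorem by plugging this single choice into Lemmas \ref{1}, \ref{2}, and \ref{3}. All the hard work has already been done in those three lemmas; the theorem itself is a matter of assembling the pieces.

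Two of the three assertions are almost immediate. The descriptive complexity bound $\I \in \bf{\Sigma^0_6}$ is exactly Lemma \ref{3}, and the Kat\v{e}tov non-comparability $\conv \not\leq_K \I$ is Lemma \ref{2} applied with the ideal $\I_d$ and the sequence $(\frac{1}{2^{n+1}})$.

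For the remaining assertion, non-extendability to a $P^+$-ideal, I would invoke the contrapositive of the implication (b)$\Rightarrow$(a) of Lemma \ref{1}: it suffices to show that $\I_d$ itself cannot be extended to a $P^+$-ideal. This is a classical fact that I would deduce from the results quoted in the Introduction. The ideal $\I_d$ is an analytic P-ideal, and the reduction $\conv \leq_K \I_d$ holds via the standard Kat\v{e}tov map obtained by grouping $\mathbb{Q}\cap[0,1]$ into density-zero blocks clustering at each chosen limit point. By the characterization of $\bf{\Sigma^0_2}$-extendability for analytic P-ideals recalled in the Introduction, $\I_d$ is not extendable to any $\bf{\Sigma^0_2}$ ideal, and by Meza's theorem that for Borel ideals $\bf{\Sigma^0_2}$-extendability is equivalent to $P^+$-extendability, $\I_d$ is therefore not extendable to any $P^+$-ideal. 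Lemma \ref{1} then transfers this to $\I = \conv(\I_d,(\alpha_n))$.

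No substantive new obstacle arises in the theorem itself; the genuine technical difficulties have been absorbed into the preceding lemmas, most prominently Lemma \ref{3}, where the $\bf{\Sigma^0_6}$ bound must be read off an intricate combinatorial characterization involving the submeasure $\phi$ defining $\I_d$, and Lemma \ref{2}, whose proof is a Cantor-scheme diagonalization against every countable family potentially witnessing a Kat\v{e}tov reduction from $\conv$.
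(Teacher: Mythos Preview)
Your proposal is correct and follows exactly the same route as the paper: take $\I=\conv(\I_d,(\tfrac{1}{2^{n+1}}))$, read off the $\bf{\Sigma^0_6}$ bound and $\conv\not\leq_K\I$ from Lemmas~\ref{3} and~\ref{2}, and deduce non-extendability to a $P^+$-ideal from Lemma~\ref{1} together with the known fact that $\I_d$ is not extendable to a $P^+$-ideal. The only difference is that you sketch the argument for this last fact (via $\conv\leq_K\I_d$ and the analytic P-ideal characterization), whereas the paper simply cites \cite{Gdansk}.
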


\begin{proof}
Consider the ideal $\conv(\I_d,(\frac{1}{2^{n+1}}))$. By Lemmas \ref{2} and \ref{3}, it is $\bf{\Sigma^0_6}$ and such that $\conv\not\leq_K\conv(\I_d,(\frac{1}{2^{n+1}}))$. Moreover, $\conv(\I_d,(\frac{1}{2^{n+1}}))$ cannot be extended to a $P^+$-ideal by Lemma \ref{1}, since $\I_d$ is not extendable to a $P^+$-ideal (cf. \cite[Theorem 4.2 and the discussion below Proposition 3.3]{Gdansk}).
\end{proof}

\section{Concluding remarks}

Proposition \ref{analytic} shows that for every Borel ideal $\I$ (and every $(\alpha_n)\in\cS$) the family $\conv(\I,(\alpha_n))$ is analytic, however in the general case we were not able to show that it is Borel. In Lemma \ref{3} we have done it only in one special case. Thus, we have produced only one counterexample for the M. Hru\v{s}\'ak's question. It should be expected that Lemma \ref{3} can be generalized for a broader class of ideals. 

It seems that our method cannot give any counterexample for the M. Hru\v{s}\'ak's question of Borel class lower than $\bf{\Sigma^0_6}$. Indeed, if we want to apply Lemma \ref{1}, we cannot take $\I$ of class lower than $\bf{\Pi^0_3}$ (as there are no $\bf{\Pi^0_2}$ ideals, all $\bf{\Sigma^0_2}$ ideals are clearly extendable to a $\bf{\Sigma^0_2}$ ideal and there are no examples of $\bf{\Sigma^0_3}$ ideals not extendable to a $\bf{\Sigma^0_2}$ ideal). Hence, there is still a chance that the original M. Hru\v{s}\'ak's  conjecture works for instance in the case of all $\bf{\Sigma^0_4}$ ideals.

Finally, it is also worth noticing that the proof of Lemma \ref{1} heavily uses the fact that $\conv$ cannot be extended to a $\bf{\Sigma^0_2}$ ideal. This suggests that $\conv$ is indeed a critical ideal for the property of extendability to a $\bf{\Sigma^0_2}$ ideal. However, the potential characterization of this property with the use of $\conv$ has to be more complicated than just the condition "$\conv\not\leq_K\I$".


\bibliographystyle{amsplain}
\bibliography{Extendability-references}

\end{document}